\newtheorem{theorem}{Theorem}[section]
\newtheorem{proposition}[theorem]{Proposition}
\newtheorem{lemma}[theorem]{Lemma}
\newtheorem{definition}[theorem]{Definition}
\def\cD{\mathcal{D}}
\def\cF{\mathcal{F}}
\def\cH{\mathcal{H}}
\def\cS{\mathcal{S}}
\def\bC{\mathbb{C}}
\def\E{\mathbb{E}}
\def\bR{\mathbb{R}}
\begin{document}

\title{
H\"older continuity for the Parabolic Anderson Model with
space-time homogeneous Gaussian noise}

\author{Raluca M. Balan\footnote{Corresponding author. Department of Mathematics and Statistics, University of Ottawa,
585 King Edward Avenue, Ottawa, ON, K1N 6N5, Canada. E-mail address:
rbalan@uottawa.ca. Research supported by a grant from the
Natural Sciences and Engineering Research Council of Canada.} \and Llu\'is
Quer-Sardanyons\footnote{Departament de Matem\`atiques, Universitat
Aut\`{o}noma de Barcelona, 08193 Bellaterra (Barcelona), Catalonia,
Spain. E-mail address: quer@mat.uab.cat. Research supported by the
grant MTM2015-67802P.}
 \and Jian Song\footnote{Department of Mathematics. University of Hong
Kong. E-mail address: txjsong@hku.hk.}}

\date{July 15, 2018}
\maketitle

\begin{abstract}
\noindent In this article, we consider the Parabolic Anderson Model
with constant initial condition, driven by a space-time homogeneous
Gaussian noise, with general covariance function in time and spatial
spectral measure satisfying Dalang's condition. First, we prove that
the solution (in the Skorohod sense) exists and is continuous in
$L^p(\Omega)$. Then, we show that the solution has a modification
whose sample paths are H\"older continuous in space and time, with
optimal exponents, and under the minimal condition on the spatial
spectral measure of the noise (which is the same as the condition
encountered in the case of the white noise in time).
This improves similar results which were obtained in \cite{HHNT, song17} under more restrictive conditions, and with sub-optimal exponents for H\"older continuity.

\end{abstract}

\medskip

\noindent {\em MSC 2010:} Primary 60H15; 60H07

\medskip
\vspace{1mm}

\noindent {\em Keywords:} Gaussian noise; stochastic partial differential equations; Malliavin calculus

\section{Introduction}

In this article, we consider the Parabolic Anderson Model on
$\bR^d$, $d\geq 1$, with constant initial condition:
\begin{equation}
\left\{\begin{array}{rcl}
\displaystyle \frac{\partial u}{\partial t}(t,x) & = & \displaystyle \frac{1}{2}\Delta u(t,x)+u(t,x)\dot{W}(t,x), \quad t>0, x \in \bR^d, \\[2ex]
\displaystyle u(0,x) & = & 1, \quad x \in \bR^d.\\[1ex]
\end{array}\right. \label{heat} 
\end{equation}

The noise $W$ is given by a zero-mean Gaussian process
$\{W(\varphi);\varphi \in \cD(\bR^{d+1})\}$ defined on a complete
probability space $(\Omega,\cF,P)$, with covariance
$$\E[W(\varphi_1)W(\varphi_2)]=\int_{\bR^2 \times \bR^{2d}}\gamma(t-s)f(x-y)\varphi_1(t,x)\varphi_2(s,y)dxdydtds=:J(\varphi_1,\varphi_2),$$
where $\gamma:\bR \to [0,\infty]$ and $f:\bR^d \to [0,\infty]$ are continuous, symmetric, locally integrable functions, such that
$$\gamma(t) <\infty \quad \mbox{if and only if} \quad t\not=0;$$
$$f(x) <\infty \quad \mbox{if and only if} \quad x\not=0.$$
Here $\cD(\bR^{d+1})$ is the space of $C^{\infty}$-functions on $\bR^{d+1}$ with compact support.
We denote by $\cH$  the completion of $\cD(\bR^{d+1})$ with respect
to $\langle \cdot,\cdot\rangle_{\cH}$ defined by
$\langle \varphi_1,\varphi_2\rangle_{\cH}=J(\varphi_1,\varphi_2)$.

We assume that $f$ is non-negative-definite (in the sense of distributions), i.e.
$$\int_{\bR^d}(\varphi*\widetilde{\varphi})(x)f(x)dx \geq 0, \quad \mbox{for all}  \quad \varphi \in \cS(\bR^d),$$
where $\widetilde \varphi(x)=\varphi(-x)$ and $\cS(\bR^d)$ is the
space of rapidly decreasing $C^{\infty}$-functions on $\bR^d$. By
the Bochner-Schwartz theorem, there exists a tempered measure $\mu$
on $\bR^d$ such that $f=\cF \mu$, where $\cF \mu$ denotes the
Fourier transform of $\mu$ in the space $\cS_{\bC}'(\bR^d)$ of
$\bC$-valued tempered distributions on $\bR^d$, i.e.
$$\int_{\bR^d} f(x)\varphi(x)dx=\int_{\bR^d} \cF \varphi(\xi) \mu(d\xi) \quad \mbox{for all} \quad \varphi \in \cS_{\bC}(\bR^d),$$
where $\cS_{\bC}(\bR^d)$ is the space of $\bC$-valued rapidly decreasing $C^{\infty}$-functions on $\bR^d$, and $\cF \varphi(x)=\int_{\bR^d} e^{-i \xi \cdot x} \varphi(x)dx$ is the Fourier transform of $\varphi$. Here $\xi \cdot x$ denotes the scalar product in $\bR^d$.
Similarly, we assume that $\gamma$ is non-negative-definite (in the sense of distributions), and so there exists a tempered measure $\nu$ on $\bR$ such that $\gamma=\cF \nu$ in $\cS_{\bC}'(\bR)$.

We denote by $G$ the fundamental solution of the heat equation on $\bR^d$:
$$G(t,x)=\frac{1}{(2\pi t)^{d/2}}\exp \left(-\frac{|x|^2}{2t} \right), \;\; t>0,\; x\in \bR^d,$$
where $|\cdot|$ denotes the Euclidean norm. Note that $G(t,\cdot) \in \cS(\bR^d)$ and
$$\cF G(t,\cdot)(\xi)=\exp \left(-\frac{t|\xi|^2}{2} \right) \quad \mbox{for all} \quad \xi \in \bR^d.$$

Similarly to the wave equation in spatial dimension $d\leq 2$, we have the following definition of the solution (see Definition 5.1 of \cite{balan-song17}).

\begin{definition}
{\rm A square-integrable process $u=\{u(t,x); t \geq 0,x \in \bR^d\}$ with
$u(0,x)=1$ for all $x \in \bR^d$ is a (mild Skorohod) {\em solution}
of equation \eqref{heat} if $u$ has a jointly measurable modification
(denoted also by $u$) such that $\sup_{(t,x)\in [0,T] \times \bR^d}
\E|u(t,x)|^2<\infty$ for all $T>0$, and for any $t>0$ and $x \in
\bR^d$, the following equality holds in $L^2(\Omega)$:
\begin{equation}
\label{def-sol}
u(t,x)=1+\int_{0}^t \int_{\bR^d}G(t-s,x-y)u(s,y) W(\delta s, \delta y),
\end{equation}
where the stochastic integral is understood in the Skorohod sense and the process $v^{(t,x)}=\{v^{(t,x)}(s,y)=1_{[0,t]}(s)G(t-s,x-y)u(s,y);s \geq 0,y \in \bR^d\}$ is Skorohod integrable.
}
\end{definition}

We refer the reader to \cite{HHNT,balan-song17,balan-chen18} for more details regarding the concept of Skorohod solution, and to \cite{nualart06} for the background on Malliavin calculus.

From \cite{HHNT}, we know that equation \eqref{heat} has a unique Skorohod solution $u$, provided that
{\em Dalang's condition} holds:
\begin{equation}
\label{Dalang-cond}
\int_{\bR^d}\frac{1}{1+|\xi|^2}\mu(d\xi)<\infty.
\end{equation}
We should point our that this condition was considered for the first time in \cite{dalang99} for equations driven by Gaussian noise which was white in time and had the same spatial covariance structure as above.

A Feynman-Kac representation for the solution $u$ was obtained in \cite{HHNT}, under additional conditions on $\gamma$ and $\mu$. Theorem 4.9 of \cite{HHNT} shows that if the function $\gamma$ satisfies:
\begin{equation}
\label{cond-gamma}
0\leq \gamma(t) \leq C_{\beta} |t|^{-\beta} \quad \mbox{for all} \quad t \in \bR,
\end{equation}
for some $\beta \in (0,1)$, and the measure $\mu$ satisfies:
$$\int_{\bR^d}\left(\frac{1}{1+|\xi|^2} \right)^{1-\beta-\alpha} \mu(d\xi)<\infty \quad
\mbox{for some} \quad \alpha \in (0,1-\beta),$$ then for any $p \geq 2$ and $T>0$, there exists a constant $C>0$ depending on $p,T,\beta$ and $\alpha$ such that for any $t,t' \in [0,T]$ and $x,x' \in \bR^d$,
$$\|u(t,x)-u(t',x')\|_p \leq C \big(|t-t'|^{\frac{\alpha}{2}}+|x-x'|^{\alpha} \big),$$
and therefore, on any compact set $[0,T] \times K$, $u$ has a modification which is $\theta_1$-H\"older continuous in time for any $\theta_1 \in (0,\frac{\alpha}{2})$, and $\theta_2$-H\"older continuous in space for any $\theta_2 \in (0,\alpha)$. This result is based on the Feynman-Kac representation of the solution, and does not yield optimal exponents for the order of H\"older continuity (see Remark 4.10 of \cite{HHNT}).

The goal of the present article is to obtain the optimal exponents for the order of H\"older continuity of the solution $u$, under the following minimal condition on $\mu$:
\begin{equation}
\label{Holder-cond} \int_{\bR^d}\left(\frac{1}{1+|\xi|^2}
\right)^{\eta}\mu(d\xi)<\infty \quad \mbox{for some} \quad \eta \in
(0,1).
\end{equation}
We note that this condition is the same as for the white noise in time:
see Theorem 2.1 of \cite{sanz-sarra02} for the case when the initial condition is given by a bounded H\"older continuous function, and Theorem 1.8 of \cite{huang-le18} for the case when the initial condition is given by a measure. We emphasize that do not impose any constraints on the temporal covariance function $\gamma$. More precisely, in  Theorem \ref{Holder-th} below, we show that if the measure $\mu$ satisfies \eqref{Holder-cond},
then $u$ has a modification which which is $\theta_1$-H\"older continuous in time for any $\theta_1 \in (0,\frac{1-\eta}{2})$, and $\theta_2$-H\"older continuous in $x$ for any $\theta_2 \in (0,1-\eta)$.
The proof of this theorem relies on a careful analysis of the $p$-th moments of the increments of the solution $u$, which is related to the problem of continuity of $u$ in $L^p(\Omega)$. Therefore, in Theorem \ref{exist-th} below, we show that under condition \eqref{Dalang-cond}, $u$ is $L^p(\Omega)$-continuous on $[0,\infty) \times \bR^d$ for any $p \geq 2$. A special effort is dedicated to the left continuity in time, a delicate issue for the heat equation, which is usually left out in the literature.

Theorem 5.9 of \cite{song17} gives the H\"older continuity of the solution to equation \eqref{heat} under condition \eqref{Holder-cond}, with the same exponent $\theta_2 <1-\eta$ in space as in our Theorem \ref{Holder-th}, but with exponent $\theta_1 <[(1-\eta) \wedge (1-\beta)]/2$ in time, where $\beta$ is given by \eqref{cond-gamma}.

Our proofs use simplified versions of the arguments contained in \cite{balan-chen18} for the heat equation with initial condition given by a measure. But the results that we present here do not follow directly from Theorems 1.1.(a) and 1.4 of \cite{balan-chen18} whose conclusions are valid for time intervals which do not include 0. The novelty of our results stems from the fact that they are valid for time intervals of the form $[0,T]$.

This article is organized as follows. In Section \ref{section-exist}, we prove the existence and uniqueness of the Skorohod solution $u$, and the fact that $u$ is continuous in $L^p(\Omega)$. In Section \ref{section-Holder}, we show that $u$ has a modification whose sample paths are H\"older continuous with optimal exponents, as mentioned above.

\section{Existence of solution and continuity in $L^p(\Omega)$}
\label{section-exist}

In this section, we show the existence and uniqueness of the solution to equation \eqref{heat} and its continuity in $L^p(\Omega)$ on $[0,\infty) \times \bR^d$, using the method of \cite{balan-chen18} for the heat equation with initial condition given by a measure.
We note that the existence of solution to equation \eqref{heat} was established also in Theorem 3.2 of \cite{HHNT}, using a different method.

Intuitively, if it exists, the solution to equation \eqref{heat} has the Wiener chaos expansion:
\begin{equation}
\label{u-series}
u(t,x)=1+\sum_{n\geq 1} I_n(f_n(\cdot,t,x)),
\end{equation}
where $I_n$ is the multiple Wiener integral of order $n$ with respect to $W$, and
$$f_n(t_1,x_1,\ldots,t_n,x_n,t,x)=G(t-t_n,x-x_n) \ldots G(t_2-t_1,x_2-x_1)1_{\{0<t_1<\ldots<t_n<t\}}.$$
We denote $J_n(t,x)=I_n(f_n(\cdot,t,x))$ for $n\geq 1$ and $J_0(t,x)=1$.

Note that $f_n(\cdot,t,x) \in \cH_n$, where $\cH_n$ is the Wiener
chaos space of order $n$, with respect to $W$. The spaces $\cH_n,
n\geq 1$ are orthogonal. Moreover, $\E|I_n(f_n(\cdot,t,x))|^2=n!
\|\widetilde{f}_n(\cdot,t,x)\|_{\cH^{\otimes n}}^{2}$ where
$\widetilde{f}_n(\cdot,t,x)$ is the symmetrization of
$f_n(\cdot,t,x)$ given by:
$$\widetilde{f}_n(t_1,x_1,\ldots,t_n,x_n,t,x)=\frac{1}{n!}\sum_{
\rho \in S_n} f_n(t_{\rho(1)}, x_{\rho(1)}, \ldots,t_{\rho(n)},x_{\rho(n)},t,x),$$
and $S_n$ is the set of all permutations on $\{1,\ldots,n\}$.
Let $\alpha_n(t)=(n!)^2 \|\widetilde{f}_n(\cdot,t,x)\|_{\cH^{\otimes n}}^{2}$ for $n\geq 1$, and $\alpha_0(t)=1$.

As in \cite{balan-chen18}, for any $t>0$, we let
$$k(t)=\int_{\bR^d}|\cF G(t,\cdot)(\xi)|^2 \mu(d\xi),$$
$h_0(t)=1$ and for $n\geq 1$,
$$h_n(t)=\int_{0<t_1<\ldots<t_n<t} k(t_2-t_1)\ldots k(t_n-t_{n-1})k(t-t_n)dt_1 \ldots dt_n.$$
For any $\gamma>0$ and $t>0$, we let
$$H(t;\gamma)=\sum_{n\geq 0}\gamma^n h_n(t) \quad \mbox{and} \quad \widetilde{H}(t;\gamma)=\sum_{n\geq 0}\sqrt{\gamma^n h_n(t)}.$$
By Lemma 3.8 of \cite{balan-chen18}, we know that under Dalang's condition,
$H(t;\gamma)<\infty$ and $\widetilde{H}(t;\gamma)<\infty$ for any $t>0$ and $\gamma>0$.
The following result is a particular case of Lemma 3.4 of \cite{balan-chen18}.

\begin{lemma}
\label{sup-lemma}
For any $t>0$,
$$\sup_{\eta \in \bR^d}\int_{\bR^d} |\cF G(t,\cdot)(\xi+\eta)|^2 \mu(d\xi)=k(t).$$
\end{lemma}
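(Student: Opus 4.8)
The plan is to establish the identity $\sup_{\eta \in \bR^d}\int_{\bR^d}|\cF G(t,\cdot)(\xi+\eta)|^2 \mu(d\xi)=k(t)$ by observing that the supremum over the translation parameter $\eta$ is attained at $\eta=0$, which gives exactly $k(t)$. The starting point is the explicit formula $\cF G(t,\cdot)(\xi)=\exp(-t|\xi|^2/2)$ quoted in the excerpt, so that $|\cF G(t,\cdot)(\xi+\eta)|^2 = e^{-t|\xi+\eta|^2}$. Thus I must show that for every $\eta$,
$$\int_{\bR^d} e^{-t|\xi+\eta|^2}\,\mu(d\xi) \;\le\; \int_{\bR^d} e^{-t|\xi|^2}\,\mu(d\xi) = k(t).$$

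The key step is to exploit that $\mu$ is a symmetric, non-negative tempered measure, hence it is the spectral measure of the non-negative-definite function $f$, and more to the point, $\mu$ is of positive type in the sense that it is invariant under $\xi \mapsto -\xi$. I would write $e^{-t|\xi+\eta|^2}$ using the Fourier representation of the Gaussian: $e^{-t|\xi+\eta|^2}$ is, up to a normalizing constant, the Fourier transform at the point $\xi+\eta$ of a Gaussian density on $\bR^d$. Concretely, with $g_t$ a centered Gaussian density of appropriate variance, $e^{-t|\xi+\eta|^2} = c_d t^{-d/2}\int_{\bR^d} e^{-i(\xi+\eta)\cdot z} g_t(z)\,dz$ for a suitable constant and variance, so that
$$\int_{\bR^d} e^{-t|\xi+\eta|^2}\,\mu(d\xi) = c_d t^{-d/2}\int_{\bR^d} e^{-i\eta\cdot z}\, g_t(z) \Big(\int_{\bR^d} e^{-i\xi\cdot z}\,\mu(d\xi)\Big)dz.$$
The inner integral $\int_{\bR^d} e^{-i\xi\cdot z}\,\mu(d\xi)$ is (formally) $f(z)$ up to constants, or more carefully the function $\cF\mu$; the crucial point is that by symmetry of $\mu$ this is real-valued and non-negative-definite, and by the same token $\int e^{-i\eta\cdot z}(\text{positive-type function})(z)\,g_t(z)dz$ is maximized at $\eta=0$ because a (continuous) positive-definite function attains its maximum at the origin. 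Hence the whole expression is largest at $\eta=0$, giving $k(t)$.

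To make this rigorous I would rather avoid manipulating $\cF\mu$ as a function and instead argue directly: the function $\eta \mapsto \int_{\bR^d} e^{-t|\xi+\eta|^2}\,\mu(d\xi)$ is, after substituting the Gaussian Fourier representation and using Fubini (justified by Dalang's condition, which guarantees finiteness), of the form $\int_{\bR^d} \cos(\eta\cdot z)\,\psi_t(z)\,dz$ where $\psi_t \ge 0$ is the product of a Gaussian with a positive-type distribution paired against test functions — equivalently it is the Fourier transform of a finite non-negative measure (namely the image of $\mu$ under the Gaussian smoothing), and therefore it is itself a non-negative-definite function of $\eta$, so it attains its maximum at $\eta=0$. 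The main obstacle is the bookkeeping of constants and the measure-theoretic justification that $\mu$ convolved/smoothed by the Gaussian is a genuine finite non-negative measure whose Fourier transform is the function in question — i.e. turning the heuristic "$\cF\mu = f \ge 0$" into a clean statement valid for tempered $\mu$. Once that is in place, the inequality and hence the claimed supremum follow immediately, and the attainment at $\eta=0$ is trivial since setting $\eta=0$ recovers $k(t)$ by definition.
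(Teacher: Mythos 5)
Your core idea is correct and is in fact the argument behind the result the paper invokes: the paper gives no proof of Lemma \ref{sup-lemma} at all, quoting it as a particular case of Lemma 3.4 of \cite{balan-chen18}, so your proposal supplies the missing (standard) argument rather than diverging from one. The mechanism is exactly right: $|\cF G(t,\cdot)(\xi+\eta)|^2=e^{-t|\xi+\eta|^2}=\cF G(2t,\cdot)(\xi+\eta)$, and the map $\eta\mapsto\int_{\bR^d}e^{-t|\xi+\eta|^2}\mu(d\xi)$ is the Fourier transform of the non-negative finite measure $G(2t,z)f(z)\,dz$, hence it is a positive-definite function of $\eta$ and is maximized at $\eta=0$, where it equals $k(t)$; the supremum is attained, giving the stated equality.

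The one point you flag as "bookkeeping" does need the fix you gesture at, because the route you write first is not literally justified: if $\mu(\bR^d)=\infty$, the Fubini interchange fails as stated (the modulus of the integrand is $G(2t,z)$, and $\int\int G(2t,z)\,dz\,\mu(d\xi)=\mu(\bR^d)$), and $\int_{\bR^d}e^{-i\xi\cdot z}\mu(d\xi)$ need not make pointwise sense. The clean way to close this is to bypass Fubini entirely and use the defining relation of $\mu$, namely $\int_{\bR^d}f(z)\varphi(z)\,dz=\int_{\bR^d}\cF\varphi(\xi)\,\mu(d\xi)$ for $\varphi\in\cS_{\bC}(\bR^d)$, applied to $\varphi(z)=e^{-i\eta\cdot z}G(2t,z)$, whose Fourier transform is $\xi\mapsto\cF G(2t,\cdot)(\xi+\eta)=e^{-t|\xi+\eta|^2}$. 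This yields $\int_{\bR^d}e^{-t|\xi+\eta|^2}\mu(d\xi)=\int_{\bR^d}e^{-i\eta\cdot z}G(2t,z)f(z)\,dz$, and since $f\geq 0$ and $G(2t,\cdot)\geq 0$ the modulus of the right-hand side is at most its value at $\eta=0$, which is $k(t)$. Note also that finiteness of $k(t)$ does not require Dalang's condition: $\mu$ tempered already gives $\int_{\bR^d}e^{-t|\xi|^2}\mu(d\xi)<\infty$ for every $t>0$. With this substitution your proof is complete and matches the cited lemma's argument.
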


We denote by $\|\cdot\|_p$ the norm in $L^p(\Omega)$. We will use frequently the fact that the $\|\cdot\|_p$-norms are equivalent on a {\em fixed} Wiener chaos space $\cH_n$. More precisely,
\begin{equation}
\label{equivalence-p-norms}
\|F\|_p \leq (p-1)^{n/2} \|F\|_2 \quad \mbox{for all} \ F \in \cH_n
\end{equation}
(see the last line of page 62 of \cite{nualart06}).

\begin{theorem}
\label{exist-th}
Under condition \eqref{Dalang-cond}, the series on the right-hand side of \eqref{u-series} converges in $L^2(\Omega)$ and the process $u$ given by \eqref{u-series} is the unique solution to equation \eqref{heat}. Moreover, for any $p\geq 2$, $u$ is $L^p(\Omega)$-continuous on $[0,\infty) \times \bR^d$, and
$$\sup_{(t,x) \in [0,T] \times \bR^d}\|u(t,x)\|_p<\infty, \quad for \ all \quad T>0.$$
\end{theorem}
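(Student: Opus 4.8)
The plan is to run the whole argument at the level of the chaos expansion \eqref{u-series}, reducing every assertion to an estimate on the kernels $\widetilde f_n(\cdot,t,x)$ and controlling the resulting series by $H$ and $\widetilde H$. By orthogonality of the chaoses and the isometry $\|I_n(f_n(\cdot,t,x))\|_2^2=n!\,\|\widetilde f_n(\cdot,t,x)\|_{\cH^{\otimes n}}^2$, everything rests on the moment bound $\|J_n(t,x)\|_2^2\le C_t^{\,n}\,h_n(t)$, where $C_t$ depends only on $t$; note that $\|J_n(t,x)\|_2^2$ is actually independent of $x$, by the spatial homogeneity of the noise and the constant initial datum. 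To get this I would write $n!\,\|\widetilde f_n(\cdot,t,x)\|_{\cH^{\otimes n}}^2=\sum_{\tau\in S_n}\langle f_n(\cdot,t,x),f_n(\cdot,t,x)\circ\tau\rangle_{\cH^{\otimes n}}$, take the Fourier transform in the spatial variables so that each factor $G(t_{j+1}-t_j,\cdot)$ becomes a Gaussian exponential, apply $ab\le\tfrac12(a^2+b^2)$ to the products of exponentials coming from the two copies of $f_n$, and then integrate out the spatial frequencies one level at a time, using Lemma \ref{sup-lemma} to absorb the shifts produced by the nested kernels; each level contributes a factor $k(\cdot)$ and leaves exactly the ordered-simplex structure of $h_n$. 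The temporal weights $\prod_j\gamma(t_j-s_j)$ are bounded, using only the local integrability of $\gamma$, by $C_t^{\,n}$ with $C_t:=\sup_{s\in[0,t]}\int_0^t\gamma(s-r)\,dr\le 2\int_0^t\gamma(u)\,du<\infty$. Summing, $\sum_{n\ge0}\|J_n(t,x)\|_2^2\le H(t;C_t)<\infty$ by Lemma 3.8 of \cite{balan-chen18}, so the orthogonal series \eqref{u-series} converges in $L^2(\Omega)$; and by \eqref{equivalence-p-norms}, $\|u(t,x)\|_p\le\sum_{n\ge0}(p-1)^{n/2}\|J_n(t,x)\|_2\le\widetilde H(t;(p-1)C_t)<\infty$. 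The change of variables $s_j=t-t_{n+1-j}$ shows $h_n(t)=\int_{0<s_1<\cdots<s_n<t}k(s_1)k(s_2-s_1)\cdots k(s_n-s_{n-1})\,ds_1\cdots ds_n$ is nondecreasing in $t$, and $C_t\le C_T$, so $\sup_{(t,x)\in[0,T]\times\bR^d}\|u(t,x)\|_p\le\widetilde H(T;(p-1)C_T)<\infty$.

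For the $L^p(\Omega)$-continuity, since $\sum_{n\ge0}\sup_{(t,x)\in[0,T]\times\bR^d}\|J_n(t,x)\|_p<\infty$ by the previous step, it suffices that each map $(t,x)\mapsto J_n(t,x)$ be continuous in $L^p(\Omega)$, and by \eqref{equivalence-p-norms} it is enough to prove this in $L^2(\Omega)$. Expanding $\|J_n(t,x)-J_n(t',x')\|_2^2=\|J_n(t,x)\|_2^2+\|J_n(t',x')\|_2^2-2\sum_{\tau\in S_n}\langle f_n(\cdot,t,x),f_n(\cdot,t',x')\circ\tau\rangle_{\cH^{\otimes n}}$ reduces the problem to the continuity of each map $(t,x,t',x')\mapsto\langle f_n(\cdot,t,x),f_n(\cdot,t',x')\circ\tau\rangle_{\cH^{\otimes n}}$, since letting $(t',x')\to(t,x)$ then forces the right-hand side to $0$. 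Each such inner product is an explicit integral over $[0,a]^{2n}\times\bR^{2nd}$, with $a$ the largest time argument, whose integrand converges pointwise a.e.\ as the parameters converge (the heat kernel is continuous and the boundary of the simplex $\{t_1<\cdots<t_n\}$ is Lebesgue-null), so what remains is to produce an integrable dominating function and invoke dominated convergence. Continuity in $x$ and right-continuity in $t$ are routine once $G(t'-t_n,x'-x_n)$ is dominated for $(t',x')$ in a neighbourhood.

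The main obstacle is left-continuity in time: when $t'<t$ the factor $G(t'-t_n,x'-x_n)$ is not uniformly bounded near $t_n=t'$, and the slab $\{t'<t_n<t\}$ occurs in $f_n(\cdot,t,x)$ but not in $f_n(\cdot,t',x')$ -- precisely the delicate point flagged in the introduction. I would handle this with a cutoff at a level $\varepsilon>0$: on $\{t_n\le t-\varepsilon\}$ a uniform dominating function is available and dominated convergence applies, while the contribution of $\{t_n>t-\varepsilon\}$ (which, once $|t-t'|<\varepsilon$, contains the problematic slab as well) is bounded, uniformly in $(t',x')$, by a quantity of order $\int_0^\varepsilon k(u)\,du$ times the remaining convergent iterated integral, hence is small; letting $\varepsilon\to0$ then gives continuity of each $\langle f_n(\cdot,t,x),f_n(\cdot,t',x')\circ\tau\rangle_{\cH^{\otimes n}}$, so $J_n$ is continuous in $L^2(\Omega)$, hence in $L^p(\Omega)$, and so is $u$ on $(0,\infty)\times\bR^d$. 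Continuity at $t=0$, i.e.\ $u(t,x)\to u(0,x)=1$, follows from $\|u(t,x)-1\|_p\le\sum_{n\ge1}(p-1)^{n/2}C_t^{\,n/2}h_n(t)^{1/2}\to0$ as $t\to0^+$, since $h_n(t)\to0$ for each $n$ and the series is dominated by the convergent $\widetilde H(1;(p-1)C_1)$.

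It remains to verify that the process $u$ given by \eqref{u-series} is the unique Skorohod solution of \eqref{heat}. Using the jointly measurable modification provided by the continuity just established, one must check that $v^{(t,x)}(s,y)=1_{[0,t]}(s)G(t-s,x-y)u(s,y)$ is Skorohod integrable with $\delta(v^{(t,x)})=\sum_{n\ge1}J_n(t,x)=u(t,x)-1$. The key point is the recursion $f_n(t_1,x_1,\dots,t_n,x_n,t,x)=1_{[0,t]}(t_n)\,G(t-t_n,x-x_n)\,f_{n-1}(t_1,x_1,\dots,t_{n-1},x_{n-1},t_n,x_n)$: applying $\delta$ to the $n$-th chaos term $1_{[0,t]}(s)G(t-s,x-y)I_n(f_n(\cdot,s,y))$ of $v^{(t,x)}$ yields exactly $I_{n+1}(\widetilde f_{n+1}(\cdot,t,x))=J_{n+1}(t,x)$, because the Skorohod integral of a kernel that is symmetric in $n$ arguments and carries one extra parameter is the multiple integral of its symmetrization in $n+1$ arguments, and the combinatorial factors match; the $L^2(\Omega)$-convergence needed to move $\delta$ through the sum comes from the bounds of the first step together with $\sup_{(s,y)\in[0,t]\times\bR^d}\|u(s,y)\|_2<\infty$. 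For uniqueness, if $u'$ is any solution then $\sup_{[0,T]\times\bR^d}\E|u'|^2<\infty$ gives it a chaos expansion $u'(t,x)=\sum_n I_n(g_n(\cdot,t,x))$ with symmetric $g_n$; inserting this into \eqref{def-sol} and matching chaos orders gives $g_0\equiv1$ and the same recursion, so by induction $g_n=\widetilde f_n(\cdot,t,x)$ and $u'=u$ in $L^2(\Omega)$ for every $(t,x)$.
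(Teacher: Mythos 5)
Your overall architecture coincides with the paper's: expand $u$ in Wiener chaos, bound $\|J_n(t,x)\|_2^2$ by $\Gamma_t^n\, n!\,h_n(t)/n!$ with $\Gamma_t=2\int_0^t\gamma(s)ds$ (your $C_t$ is exactly this $\Gamma_t$), obtained from the Fourier representation, the bound $ab\le\tfrac12(a^2+b^2)$ to pass to diagonal time variables, and Lemma \ref{sup-lemma} to integrate out the frequencies one shell at a time; then sum via $H$ and $\widetilde H$ (Lemma 3.8 of \cite{balan-chen18}) and use \eqref{equivalence-p-norms} for the $p$-norms, exactly as in Steps 1 and 4 of the paper. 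The existence and uniqueness part (Skorohod integral of a chaos-expanded integrand via symmetrization of the recursive kernels, and uniqueness by projecting \eqref{def-sol} onto each chaos) is the content the paper delegates to Theorems 5.2 and 6.1 of \cite{balan-song17}; your sketch is the standard argument and is fine, as is your separate treatment of continuity at $t=0$ via $h_n(t)\to0$.

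Where you genuinely diverge is the $L^p$-continuity of each $J_n$, in particular left-continuity in time, which you correctly identify as the delicate point. The paper splits $J_n(t,x)-J_n(t-h,x)$ into an interior part $A_n'$ and a slab part $B_n'$, and controls $A_n'$ by the quantitative pointwise bound $|\cF G(u_n-h,\cdot)(\xi)-\cF G(u_n,\cdot)(\xi)|^2\le\min\bigl(\tfrac1e\tfrac{h}{u_n-h},1\bigr)e^{-(u_n-h)|\xi|^2/2}$, leading to $\int_0^t\min(h/(es),1)\,k(s/2)\,ds\to0$; you instead work with the cross inner products $\langle f_n(\cdot,t,x),f_n(\cdot,t',x')\circ\tau\rangle_{\cH^{\otimes n}}$ and a soft cutoff: dominated convergence away from the terminal time plus smallness of the region where the largest time variable exceeds $t-\varepsilon$. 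This is a viable alternative, and its smallness input $\int_0^\varepsilon k(u)\,du\to0$ is the same use of Dalang's condition \eqref{Dalang-cond}. One detail needs care, however: after the $\tfrac12(a^2+b^2)$ (or Cauchy--Schwarz) reduction to diagonal kernels, the indicator of the bad region sits on the ${\bf t}$-variables while one of the two resulting diagonal terms involves only the ${\bf s}$-variables, so the factor $\int_0^\varepsilon k$ does not appear automatically in that term; uniform smallness is recovered either by splitting the kernels themselves ($f_n(\cdot,t,x)=f_n1_{[0,t-\varepsilon]^n}+f_n1_{D}$, then Cauchy--Schwarz in $\cH^{\otimes n}$, bounding $\|f_n1_D\|_{\cH^{\otimes n}}$ as the paper bounds $B_n'$), or by invoking the uniform absolute continuity of $s\mapsto\int\gamma$ over short intervals of $[-2T,2T]$. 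With that repaired, your argument yields the stated $L^p$-continuity; note, though, that it is purely qualitative, whereas the paper's explicit estimates on $A_n$, $B_n$, $C_n$ are recycled verbatim in the proof of Theorem \ref{Holder-th}, which is why the paper takes the quantitative route.
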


\begin{proof}
{\em Step 1. (Summability of the series)} We first show that the series on the right-hand side of \eqref{u-series} converges in $L^2(\Omega)$, which is equivalent to:
$$\sum_{n\geq 0} \frac{1}{n!} \alpha_n(t)<\infty.$$
By relation (4.15) in \cite{balan-song17},
$$\alpha_n(t) \leq \Gamma_t^n \int_{[0,t]^n} \psi_{n}({\bf t},{\bf t})d{\bf t},$$
where ${\bf t}=(t_1,\ldots,t_n)$, $\Gamma_t=2 \int_0^t \gamma(s)ds$,
\begin{align}
\nonumber
\psi_n({\bf t},{\bf t}) & =\int_{\bR^{nd}} |\cF G(u_1,\cdot)(\xi_1)|^2 \ldots |\cF G(u_{n-1},\cdot)(\xi_1+\ldots+\xi_{n-1})|^2 \\
\label{def-psi-t} &  \quad \quad \quad \times |\cF
G(u_{n},\cdot)(\xi_1+\ldots+\xi_n)|^2 \mu(d\xi_1) \ldots
\mu(d\xi_n),
\end{align}
$u_j=t_{\rho(j+1)}-t_{\rho(j)}$ for $j=1,\ldots,n$,
$t_{\rho(n+1)}=t$, and $\rho$ is the permutation of $\{1,\ldots,n\}$
such that $t_{\rho(1)}<\ldots<t_{\rho(n)}$. Note that
$$\psi_n({\bf t},{\bf t}) \leq \prod_{j=1}^{n} \left(\sup_{\eta \in \bR^d} \int_{\bR^d} |\cF G(u_j,\cdot)(\xi_j+\eta)|^2 \mu(d\xi_j) \right)=\prod_{j=1}^{n}k(u_j),$$
where the last equality follows by Lemma \ref{sup-lemma}.
Hence,
\begin{align*}
\alpha_n(t) & \leq \Gamma_t^n \sum_{\rho \in S_n} \int_{0<t_{\rho(1)}<\ldots<t_{\rho(n)}<t} \prod_{j=1}^{n} k(t_{\rho(j+1)}-t_{\rho(j)}) d{\bf t}\\
&= \Gamma_t^n n! \int_{0<t_1<\ldots<t_n<t} \prod_{j=1}^{n}k(t_{j+1}-t_j) d{\bf t}=\Gamma_t^n n! h_n(t),
\end{align*}
and
$$\sum_{n\geq 0}\frac{1}{n!} \alpha_n(t) \leq \sum_{n\geq 0} \Gamma_t^n h_n(t)=H(t;\Gamma_t)<\infty.$$

{\em Step 2. (Existence of solution)}
The fact that the process $u$ given by \eqref{u-series} is a solution to equation \eqref{heat} follows exactly as in the proof of Theorem 5.2 of \cite{balan-song17}. This argument uses the fact that $u$ is $L^2(\Omega)$-continuous, which will be proved in {\em Step 5} below.

{\em Step 3. (Uniqueness of solution)} This follows as in the proof of Theorem 6.1 of \cite{balan-song17}.

{\em Step 4. (Uniform boundedness of $p$-th moments)}
By Minkovski's inequality and \eqref{equivalence-p-norms},
\begin{align*}
\|u(t,x)\|_p & \leq \sum_{n\geq 0} (p-1)^{n/2} \|J_n(t,x)\|_{2} =\sum_{n\geq 1} (p-1)^{n/2} \left(
\frac{1}{n!}\alpha_n(t)\right)^{1/2} \\
&\leq \sum_{n\geq 0} (p-1)^{n/2} \Gamma_t^{n/2} \sqrt{h_n(t)}=\widetilde{H}\big(t;(p-1)\Gamma_t\big).
\end{align*}
Since the function $\Gamma_t$ is non-decreasing in $t$ and $\widetilde{H}(t;\gamma)$ is non-decreasing in $t$ and $\gamma$, we infer that
$\|u(t,x)\|_p \leq \widetilde{H}\big(T;(p-1)\Gamma_T\big)<\infty$ for any $(t,x) \in [0,T] \times \bR^d$.

{\em Step 5. (Continuity in $L^p(\Omega)$)} This follows as in the proof of Theorem 7.1 of \cite{balan-song17}, provided we show that $J_n$ is $L^p(\Omega)$-continuous, for any $n\geq 1$.

We start with the right-continuity in time of $J_n$.  We will prove that for any $t \geq 0$,
$$\lim_{h \downarrow 0}\|J_n(t+h,x)-J_n(t,x)\|_p=0 \quad \mbox{uniformly in} \ x \in \bR^d.$$
By \eqref{equivalence-p-norms}, for any $t \geq 0$, $x \in \bR^d$ and $h>0$
\begin{align}
\nonumber
\|J_n(t+h,x)-J_n(t,x)\|_p  & \leq (p-1)^{n/2}\|J_n(t+h,x)-J_n(t,x)\|_2  \\
& \leq (p-1)^{n/2}\left[\frac{2}{n!}\big(A_n(t,h)+B_n(t,h) \big) \right]^{1/2},
\label{norm-p-J}
\end{align}
where
\begin{eqnarray*}
A_n(t,h)&=&(n!)^2 \|\widetilde{f}_n(\cdot,t+h,x)1_{[0,t]^{n}}-\widetilde{f}_n(\cdot,t,x) \|_{\cH^{\otimes n}}^{2}\\
B_n(t,h)&=& (n!)^2 \|\widetilde{f}_n(\cdot,t+h,x)1_{[0,t+h]^{n}\verb2\2 [0,t]^n} \|_{\cH^{\otimes n}}^{2}.
\end{eqnarray*}
Note that $A_n(t,h)$ and $B_n(t,h)$ do not depend on $x$.

We first treat $A_n(t,h)$. Using relations (7.6) and (7.7) of \cite{balan-song17}, we see that
\begin{equation}
\label{estimate-A}
A_n(t,h) \leq \Gamma_t^n \int_{[0,t]^n} \psi_{t,h}^{(n)}({\bf t},{\bf t})d{\bf t},
\end{equation}
where ${\bf t}=(t_1,\ldots,t_n)$,
\begin{align}
\nonumber
\psi_{t,h}^{(n)}({\bf t},{\bf t})&= \int_{\bR^{nd}} |\cF G(u_1,\cdot)(\xi_1)|^2 \ldots |\cF G(u_{n-1},\cdot)(\xi_1+\ldots+\xi_{n-1})|^2 \\
\label{def-psi-th} & \qquad \quad \times |\cF
[G(u_n+h,\cdot)-G(u_n,\cdot)](\xi_1+\ldots+\xi_n)|^2
\mu(d\xi_1)\ldots \mu(d\xi_n),
\end{align}
and $u_1, \ldots,u_n$ are the same as in {\em Step 1} above. We use the fact that
\begin{align*}
& |\cF [G(u_n+h,\cdot)-G(u_n,\cdot)](\xi_1+\ldots+\xi_n)|^2
= \left(e^{-(u_n+h)|\xi_1+\ldots+\xi_n|^2/2}-e^{-u_n|\xi_1+\ldots+\xi_n|^2/2}\right)^2  \\
& \quad \quad \quad \quad \quad
=|\cF G(u_n,\cdot)(\xi_1+\ldots+\xi_n)|^2\left( 1-e^{-h|\xi_1+\ldots+\xi_n|^2/2}\right)^2
\end{align*}
converges to $0$ when $h \to 0$, and is bounded by $|\cF G(u_n,\cdot)(\xi_1+\ldots+\xi_n)|^2$ (since $h>0$) . Note that
$$\int_{\bR^{nd}} |\cF G(u_1,\cdot)(\xi_1)|^2 \ldots |\cF G(u_n,\cdot)(\xi_1+\ldots+\xi_n)|^2 \mu(d\xi_1)\ldots \mu(d\xi_n) \leq \prod_{j=1}^{n}k(u_j).$$
By the dominated convergence theorem, $\psi_{t,h}^{(n)}({\bf t},{\bf t}) \to 0$ as $h \to 0$. Moreover,
$\psi_{t,h}^{(n)}({\bf t},{\bf t})$ is bounded by the function $\prod_{j=1}^{n}k(u_j)$, which is integrable on $[0,t]^n$, its integral being equal to $n! h_n(t)$. By the dominated convergence theorem, $A_n(t,h) \to 0$ as $h \to 0$.

Next, we treat $B_n(t,h)$. Let $D_{t,h}=[0,t+h]^n \verb2\2 [0,t]^n$. Using relation (7.10) of \cite{balan-song17}, we see that
\begin{equation}
\label{estimate-B}
B_n(t,h) \leq \Gamma_{t+h}^n \int_{[0,t+h]^n} \gamma_{t,h}^{(n)}({\bf t},{\bf t}) 1_{D_{t,h}}({\bf t}) d{\bf t},
\end{equation}
where
\begin{align}
\nonumber
\gamma_{t,h}^{(n)}({\bf t},{\bf t}) & =\int_{\bR^{nd}} |\cF G(u_1,\cdot)(\xi_1)|^2 \ldots |\cF G(u_{n-1},\cdot)(\xi_1+\ldots+\xi_{n-1})|^2 \\\
\label{def-gamma-th}
& \quad \quad \quad |\cF G(u_n+h)(\xi_1+\ldots+\xi_n)|^2 \mu(d\xi_1) \ldots \mu(d\xi_n).
\end{align}
We use the fact that
$$|\cF G(u_n+h,\cdot)(\xi_1+\ldots+\xi_n)|^2=|\cF G(u_n,\cdot)(\xi_1+\ldots+\xi_n)|^2 e^{-h|\xi_1+\ldots+\xi_n|^2}$$
converges to $|\cF G(u_n,\cdot)(\xi_1+\ldots+\xi_n)|^2$ as $h \to 0$, and is bounded by $|\cF G(u_n,\cdot)(\xi_1+\ldots+\xi_n)|^2$ (since $h>0$). By the dominated convergence theorem,
\begin{align*}
\lim_{h \to 0}\gamma_{t,h}^{(n)}({\bf t},{\bf t}) & = \int_{\bR^{nd}} |\cF G(u_1,\cdot)(\xi_1)|^2 \ldots |\cF G(u_{n-1},\cdot)(\xi_1+\ldots+\xi_{n-1})|^2 \\
& \quad \quad \quad \times |\cF G(u_n,\cdot)(\xi_1+\ldots+\xi_n)|^2
\mu(d\xi_1) \ldots \mu(d\xi_n).
\end{align*}
But $1_{D_{t,h}} \to 0$ as $h \to 0$ and $\gamma_{t,h}^{(n)}({\bf
t},{\bf t})1_{D_{t,h}} $ is bounded by the function
$\prod_{j=1}^{n}k(u_j)$, which is integrable on $[0,t]^n$. By the
dominated convergence theorem, $B_n(t,h) \to 0$ as $h \to 0$.

We consider now the left-continuity in time of $J_n$.  We will prove that for any $t>0$,
$$\lim_{h \downarrow 0}\|J_n(t-h,x)-J_n(t,x)\|_p=0 \quad \mbox{uniformly in} \ x \in \bR^d.$$

We argue as in Step 2 of the proof of Lemma B.3 of \cite{balan-chen18}.
By \eqref{equivalence-p-norms}, for any $t> 0$, $x \in \bR^d$ and $h \in (0,t)$
\begin{align}
\nonumber
\|J_n(t-h,x)-J_n(t,x)\|_p  & \leq (p-1)^{n/2}\|J_n(t-h,x)-J_n(t,x)\|_2  \\
& \leq (p-1)^{n/2}\left[\frac{2}{n!}\big(A_n'(t,h)+B_n'(t,h) \big) \right]^{1/2},
\label{norm-p-J}
\end{align}
where
\begin{eqnarray*}
A_n'(t,h)&=&(n!)^2 \|\widetilde{f}_n(\cdot,t-h,x)-\widetilde{f}_n(\cdot,t,x)1_{[0,t-h]^{n}} \|_{\cH^{\otimes n}}^{2}\\
B_n'(t,h)&=& (n!)^2 \|\widetilde{f}_n(\cdot,t,x)1_{[0,t]^{n}\verb2\2 [0,t-h]^n} \|_{\cH^{\otimes n}}^{2}.
\end{eqnarray*}
Note that $A_n'(t,h)$ and $B_n'(t,h)$ do not depend on $x$.

We first treat $A_n'(t,h)$. Note that
\begin{align*}
A_n'(t,h)& =\int_{[0,t-h]^{2n}} \prod_{j=1}^{n}\gamma(t_j-s_j)
\psi_{t,h}^{(n)'}({\bf t},{\bf s})d{\bf t} d{\bf s}
 \leq \Gamma_{t-h}^n \int_{[0,t-h]^n} \psi_{t,h}^{(n)'}({\bf t},{\bf t})d{\bf t},
\end{align*}
where
\begin{align*}
\psi_{t,h}^{(n)'}({\bf t},{\bf s})&= \int_{\bR^{nd}} \cF [g_{\bf t}^{(n)}(\cdot,t,x)-g_{\bf t}^{(n)}(\cdot,t-h,x)](\xi_1,\ldots,\xi_n)\\
& \quad \quad \quad \times \overline{\cF [g_{\bf
s}^{(n)}(\cdot,t,x)-g_{\bf
s}^{(n)}(\cdot,t-h,x)](\xi_1,\ldots,\xi_n)} \, \mu(d\xi_1) \ldots
\mu(d\xi_n)
\end{align*}
and $g_{\bf t}^{(n)}(\cdot,t,x)=n! \widetilde{f}_n(t_1,\cdot,\ldots,t_n,\cdot,t,x)$.
Let $(t_1,\ldots,t_n) \in [0,t-h]^n$ and $\rho$ be the permutation of $1,\ldots,n$ such that
$t_{\rho(1)}<\ldots<t_{\rho(n)}$. Let $u_j=t_{\rho(j+1)}-t_{\rho(j)}$ for any $j=1,\ldots,n-1$. Note that
\begin{align*}
&\cF g_{\bf t}^{(n)}(\cdot,t-h,x)(\xi_1,\ldots,\xi_n)\\
&\quad  =e^{-i (\xi_1+\ldots+\xi_n)\cdot x} \cF G(u_1,\cdot)(\xi_{\rho(1)}) \ldots \cF G(u_{n-1},\cdot)(\xi_{\rho(1)}+\ldots+\xi_{\rho(n-1)}) \\
& \qquad \quad \times \cF
G(t-h-t_{\rho(n)},\cdot)(\xi_{\rho(1)}+\ldots+\xi_{\rho(n)})
\end{align*}
and
\begin{align*}
& \cF g_{\bf t}^{(n)}(\cdot,t,x)(\xi_1,\ldots,\xi_n)\\
&\quad =e^{-i (\xi_1+\ldots+\xi_n)\cdot x} \cF G(u_1,\cdot)(\xi_{\rho(1)}) \ldots \cF G(u_{n-1},\cdot)(\xi_{\rho(1)}+\ldots+\xi_{\rho(n-1)}) \\
&\qquad \quad \times  \cF
G(t-t_{\rho(n)},\cdot)(\xi_{\rho(1)}+\ldots+\xi_{\rho(n)}).
\end{align*}
Let $u_n=t-t_{\rho(n)}$. Note that $u_n-h>0$ since
$t_{\rho(n)}<t-h$. It follows that
\begin{align*}
\psi_{t,h}^{(n)'}({\bf t},{\bf t})&= \int_{\bR^{nd}} |\cF G(u_1,\cdot)(\xi_1)|^2 \ldots |\cF G(u_{n-1},\cdot)(\xi_1+\ldots+\xi_{n-1})|^2 \\
\label{def-psi-th}
& \quad \quad \times |\cF [G(u_n-h,\cdot)-G(u_n,\cdot)](\xi_1+\ldots+\xi_n)|^2 \mu(d\xi_1)\ldots \mu(d\xi_n) \\
& \leq \prod_{j=1}^{n-1} \left(\sup_{\eta \in \bR^d} \int_{\bR^d} |\cF G(u_j,\cdot)(\xi_j+\eta)|^2 \mu(d\xi_j) \right) \\
& \quad \quad \times  \sup_{\eta \in \bR^d} \int_{\bR^d} |\cF G(u_n-h,\cdot)(\xi_n+\eta)-\cF G(u_n,\cdot)(\xi_n+\eta)|^2 \mu(d\xi_n) \\
& =\prod_{j=1}^{n-1} k(u_j) \cdot  \sup_{\eta \in \bR^d} \int_{\bR^d} |\cF G(u_n-h,\cdot)(\xi_n+\eta)-\cF G(u_n,\cdot)(\xi_n+\eta)|^2 \mu(d\xi_n).
\end{align*}
Using the inequality $(1-e^{-x})^2 \leq 1-e^{-x} \leq \min(x,1)$, we obtain:
\begin{align*}
|\cF G(u_n-h,\cdot)(\xi)-\cF G(u_n,\cdot)(\xi)|^2&
=e^{-(u_n-h)|\xi|^2}\big(1-e^{-h|\xi|^2/2} \big)^2 \\
& \leq e^{-(u_n-h)|\xi|^2}  \min \left( \frac{h|\xi|^2}{2},1\right)\\
&=\min\left( e^{-(u_n-h)|\xi|^2}  \frac{h|\xi|^2}{2}, e^{-(u_n-h)|\xi|^2}  \right).
\end{align*}
We bound separately the two terms appearing in the minimum above. For the first term, we use inequality (B.27) of \cite{balan-chen18} which we recall below: for any $A>0$ and $x \geq 0$,
$$\exp \left(-A x^2 \right) x^{2}  \leq \frac{2}{e} A^{-1} \exp \left(-\frac{A}{2} x^2 \right).$$
We obtain:
$$ \frac{h}{2} e^{-(u_n-h)|\xi|^2}  |\xi|^2 \leq \frac{1}{e} \cdot \frac{h}{u_n-h}e^{-(u_n-h)|\xi|^2/2}.$$
For the second term, we use the fact that $e^{-(u_n-h)|\xi|^2} \leq e^{-(u_n-h)|\xi|^2/2}$. Hence,
$$|\cF G(u_n-h,\cdot)(\xi)-\cF G(u_n,\cdot)(\xi)|^2 \leq \min \left( \frac{1}{e} \cdot \frac{h}{u_n-h},1\right) e^{-(u_n-h)|\xi|^2/2}$$
and
\begin{align*}
& \sup_{\eta \in \bR^d}\int_{\bR^d}|\cF G(u_n-h,\cdot)(\xi+\eta)-\cF G(u_n,\cdot)(\xi+\eta)|^2  \\
 & \quad \quad \leq \min \left( \frac{1}{e} \cdot \frac{h}{u_n-h},1\right) \sup_{\eta \in \bR^d}\int_{\bR^d}e^{-(u_n-h)|\xi+\eta|^2/2}\mu(d\xi) \\
& \quad \quad  = \min \left( \frac{1}{e} \cdot
\frac{h}{u_n-h},1\right)  k \left(\frac{u_n-h}{2} \right),
\end{align*}
using Lemma \ref{sup-lemma} for the last equality. It follows that
\begin{align*}
A_n'(t,h) & \leq \Gamma_{t-h}^n n! \int_{0<t_1<\ldots<t_{n}<t-h} \prod_{j=1}^{n-1}k(t_{j+1}-t_j) \min \left( \frac{1}{e} \cdot \frac{h}{t-t_n-h},1\right) k\left( \frac{t-t_n-h}{2}\right) d{\bf t} \\
&= \Gamma_{t-h}^n n! \int_0^{t-h} h_{n-1}(t_n) \min \left( \frac{1}{e} \cdot \frac{h}{t-t_n-h},1\right) k\left( \frac{t-t_n-h}{2}\right) dt_n \\
&= \Gamma_{t-h}^n n! \int_0^{t-h} h_{n-1}(t-h-s) \min \left(\frac{h}{es},1\right) k\left( \frac{s}{2}\right) ds\\
& \leq \Gamma_{t}^n n! \, h_{n-1}(t)\int_0^t  \min \left(\frac{h}{es},1\right) k\left( \frac{s}{2}\right)  ds.
\end{align*}
By the dominated convergence theorem, the last integral above converges to $0$ as $h \to 0$ since the integrand converges to $0$ when $h \to 0$ and is bounded by $k(s/2)$, and by \eqref{Dalang-cond},
$$\int_0^t k\left(\frac{s}{2}\right)ds=2\int_0^{t/2} k(s)ds=2\int_0^{t/2} \int_{\bR^d}|\cF G(s,\cdot)(\xi)|^2 \mu(d\xi) ds<\infty.$$

Next, we treat $B_n'(t,h)$. Let $D_{t,h}'=[0,t]^n \verb2\2 [0,t-h]^n$. Then
\begin{align*}
B_n'(t,h) & =\int_{[0,t-h]^n} \prod_{j=1}^{n}\gamma(t_j-s_j) \psi_n({\bf t},{\bf s})1_{D_{t,h}'}({\bf t})
1_{D_{t,h}'}({\bf s})d{\bf t} d{\bf s} \\
& \leq \Gamma_{t-h}^n  \int_{[0,t-h]^n} \psi_n({\bf t},{\bf t})1_{D_{t,h}'}({\bf t}) d{\bf t},
\end{align*}
where $\psi_n({\bf t},{\bf t})$ is given by \eqref{def-psi-t}. The last integral converges to $0$ when $h \to 0$ by the dominated convergence theorem, since $D_{t,h}' \to \emptyset$ when $h \to 0$.

Finally, we examine the continuity in space of $J_n$. By \eqref{equivalence-p-norms}, for any $x\in \bR^d$ and $z \in \bR^d$,
\begin{equation}
\label{norm-p-J2}
\|J_n(t,x+z)-J_n(t,x)\|_p  \leq (p-1)^{n/2}\|J_n(t,x+z)-J_n(t,x)\|_2 \leq (p-1)^{n/2}\left( \frac{1}{n!}C_n(t,z) \right)^{1/2},
\end{equation}
where
 $$C_n(t,z)=(n!)^2 \|\widetilde{f}_n(\cdot,t,x+z)-\widetilde{f}_n(\cdot,t,x) \|_{\cH^{\otimes n}}^{2}.$$
By relation (7.14) of \cite{balan-song17},
\begin{equation}
\label{estimate-C}
C_n(t,z) \leq \Gamma_t^n \int_{[0,t]^n}\psi_{t,z}^{(n)}({\bf t},{\bf t})d{\bf t},
\end{equation}
where
\begin{align}
\nonumber
\psi_{t,z}^{(n)}({\bf t},{\bf t}) &= \int_{\bR^{nd}}|\cF G(u_1,\cdot)(\xi_1)|^2 \ldots |\cF G(u_{n-1},\cdot)(\xi_{n-1})|^2 \\
\label{def-psi-tz} & \qquad \quad \times |\cF
G(u_n,\cdot)(\xi_1+\ldots+\xi_n)|^2 |1-e^{-i
(\xi_1+\ldots+\xi_n)\cdot z}|^2 \mu(d\xi_1) \ldots \mu(d\xi_n)
\end{align}
Note that  $|1-e^{-i (\xi_1+\ldots+\xi_n)\cdot z}|^2$ converges to $0$ when $|z| \to 0$ and is bounded by $2$. By the dominated convergence theorem, $C_n(t,z) \to 0$ when $|z| \to 0$.

\end{proof}

\section{H\"older continuity}
\label{section-Holder}

In this section, we prove that under condition \eqref{Holder-cond}, the solution $u$ is H\"older continuous.
The proof of this theorem is based on a preliminary result regarding the Fourier transform of the fundamental solution of the heat equation, which is similar to Proposition 7.4 of \cite{conus-dalang09} that deals with the fundamental solution of the wave equation.

\begin{proposition}
\label{t-increm-G}
If $\mu$ satisfies \eqref{Holder-cond}, then for any $t>0$, $h>0$ and $z \in \bR^d$,
\begin{equation}
\label{sup-1}
\sup_{\eta \in \bR^d} \int_{\bR^d} |\cF G(t+h,\cdot)(\xi+\eta)-\cF G(t,\cdot)(\xi+\eta)|^2 \mu(d\xi) \leq C h^{\theta} t^{-\theta} k\Big(\frac{t}{2}\Big),
\end{equation}
\begin{equation}
\label{sup-2}
\sup_{\eta \in \bR^d} \int_{\bR^d} |e^{-i(\xi+\eta)\cdot z}-1|^2 \, |\cF G(t,\cdot)(\xi+\eta)|^2 \mu(d\xi) \leq C |z|^{2\theta} t^{-\theta}  k\Big(\frac{t}{2}\Big),
\end{equation}
where $\theta=1-\eta$, $C>0$ is a constant depending only on
$\theta$, and we recall that $k(t)=\int_{\bR^d} |\cF
G(t,\cdot)(\xi)|^2\mu(d\xi)$.
\end{proposition}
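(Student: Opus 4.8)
The plan is to prove both estimates by bounding the integrand pointwise in $\xi$, for fixed $\eta\in\bR^d$, by a constant multiple of $|\cF G(t/2,\cdot)(\xi+\eta)|^2$, then integrating $\mu(d\xi)$, taking the supremum over $\eta$, and applying Lemma \ref{sup-lemma} \emph{at time $t/2$} to produce the factor $k(t/2)$. Throughout I use that $\cF G(s,\cdot)(\zeta)=e^{-s|\zeta|^2/2}$, so $|\cF G(s,\cdot)(\zeta)|^2=e^{-s|\zeta|^2}$, and I abbreviate $\zeta=\xi+\eta$.

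First I would record three elementary inequalities, valid for all $x\ge 0$, $\tau\in\bR$ and $\theta\in(0,1)$:
\begin{equation*}
(1-e^{-x})^2\le\min(x,1)\le x^{\theta},\qquad |1-e^{-i\tau}|^2=2(1-\cos\tau)\le\min(\tau^2,4)\le 4^{1-\theta}|\tau|^{2\theta},
\end{equation*}
together with the ``$\theta$-power'' analogue of inequality (B.27) of \cite{balan-chen18}: for any $A>0$ and $x\ge 0$,
\begin{equation*}
x^{2\theta}\,e^{-A x^2}\le\Big(\tfrac{2\theta}{eA}\Big)^{\theta}e^{-A x^2/2},
\end{equation*}
which follows by writing $x^{2\theta}e^{-Ax^2}=(2/A)^{\theta}\,(Ax^2/2)^{\theta}e^{-Ax^2/2}\cdot e^{-Ax^2/2}$ and using $\sup_{y\ge 0}y^{\theta}e^{-y}=(\theta/e)^{\theta}$.

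For \eqref{sup-1} I would write $|\cF G(t+h,\cdot)(\zeta)-\cF G(t,\cdot)(\zeta)|^2=e^{-t|\zeta|^2}\big(1-e^{-h|\zeta|^2/2}\big)^2$, bound $\big(1-e^{-h|\zeta|^2/2}\big)^2\le 2^{-\theta}h^{\theta}|\zeta|^{2\theta}$ by the first inequality, and then apply the Gaussian-absorption inequality with $A=t$ to get $|\zeta|^{2\theta}e^{-t|\zeta|^2}\le(2\theta/(et))^{\theta}e^{-t|\zeta|^2/2}$, so the integrand is at most $C_\theta\,h^{\theta}t^{-\theta}\,|\cF G(t/2,\cdot)(\zeta)|^2$. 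For \eqref{sup-2} I would use the cosine inequality with $\tau=\zeta\cdot z$ and $|\tau|\le|\zeta|\,|z|$ to get $|e^{-i\zeta\cdot z}-1|^2|\cF G(t,\cdot)(\zeta)|^2\le 4^{1-\theta}|z|^{2\theta}|\zeta|^{2\theta}e^{-t|\zeta|^2}$, and apply the same absorption step. In both cases, integrating $\mu(d\xi)$ with $\zeta=\xi+\eta$, taking $\sup_{\eta\in\bR^d}$, and invoking Lemma \ref{sup-lemma} at time $t/2$ yields the claimed right-hand sides, with $C$ a constant depending only on $\theta$. Finiteness of $k(t/2)$ for every $t>0$ follows because \eqref{Holder-cond} implies Dalang's condition \eqref{Dalang-cond} and $e^{-(t/2)|\xi|^2}\le c_t\,(1+|\xi|^2)^{-1}$.

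The computation is routine; the only points needing care are (a) arranging the Gaussian left over after absorbing the polynomial factor $|\zeta|^{2\theta}$ to be $e^{-t|\zeta|^2/2}=|\cF G(t/2,\cdot)(\zeta)|^2$ rather than $e^{-t|\zeta|^2}$ — which is exactly why the halved exponent in the absorption inequality is used, and which makes Lemma \ref{sup-lemma} deliver $k(t/2)$ — and (b) choosing the exponents in the first two elementary inequalities so as to extract precisely $h^{\theta}$ (respectively $|z|^{2\theta}$) while keeping every constant dependent on $\theta$ only.
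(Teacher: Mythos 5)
Your proposal is correct and follows essentially the same route as the paper: bound the increment pointwise by $C_\theta h^{\theta}|\zeta|^{2\theta}e^{-t|\zeta|^2}$ (resp.\ $C_\theta|z|^{2\theta}|\zeta|^{2\theta}e^{-t|\zeta|^2}$), absorb $|\zeta|^{2\theta}$ into half the Gaussian via $x^{2\theta}e^{-Ax^2}\le C_\theta A^{-\theta}e^{-Ax^2/2}$, and conclude with Lemma \ref{sup-lemma} at time $t/2$. The only difference is cosmetic: you prove the absorption inequality and give explicit constants where the paper cites inequality (5.2) of \cite{balan-chen18}.
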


\begin{proof} We use the same estimates as in the proof of Theorem 1.4 of \cite{balan-chen18}. More precisely, we will use inequality (5.2) of \cite{balan-chen18} which we recall below: for any $A>0$,
\begin{equation}
\label{ineq1}
\exp(-Ax^2) x^{2\theta} \leq C_{\theta} A^{-\theta} \exp \left( -\frac{A}{2}x^2\right) \quad \mbox{for all} \quad x \geq 0,
\end{equation}
where $C_{\theta}>0$ is a constant depending on $\theta$. Using the fact that $(1-e^{-x})^2 \leq (1-e^{-x})^{\theta} \leq x^{\theta}$ for any $x\geq 0$, it follows that
$\Big(1-e^{-h|\xi|^2/2} \Big)^2 \leq 2^{-\theta} h^{\theta}|\xi|^{2\theta}$. Hence, for any $\xi \in \bR^d$,
\begin{align*}
|\cF G(t+h,\cdot)(\xi)-\cF G(t,\cdot)(\xi)|^2& =e^{-t|\xi|^2} \Big(1-e^{-h|\xi|^2/2} \Big)^2 \leq 2^{-\theta}h^{\theta} e^{-t|\xi|^2}|\xi|^{2\theta} \\
& \leq 2^{-\theta} C_{\theta} h^{\theta}t^{-\theta} e^{-t|\xi|^2/2}=2^{-\theta} C_{\theta} h^{\theta}t^{-\theta} |\cF G(t/2,\cdot)(\xi)|^2
\end{align*}
It follows that
\begin{align*}
& \sup_{\eta \in \bR^d} \int_{\bR^d} |\cF G(t+h,\cdot)(\xi+\eta)-\cF G(t,\cdot)(\xi+\eta)|^2 \mu(d\xi)  \\
& \quad \quad \quad \leq 2^{-\theta} C_{\theta}
h^{\theta}t^{-\theta} \sup_{\eta \in \bR^d} \int_{\bR^d} |\cF
G(t/2,\cdot)(\xi+\eta)|^2 \mu(d\xi)\\
& \quad \quad \quad =2^{-\theta} C_{\theta} h^{\theta}t^{-\theta}
k(t/2),
\end{align*}
where the last equality is due to Lemma \ref{sup-lemma}. This proves \eqref{sup-1}.

To prove \eqref{sup-2}, we use the inequality: for any $x \in \bR$,
$$|1-e^{ix}|^2 =2(1-\cos x)^2 \leq K_{\theta} |x|^{2\theta},$$
where $K_{\theta}>0$ is a constant depending on $\theta$. Combining this with Cauchy-Schwarz inequality $|\xi \cdot z| \leq |\xi| |z|$ and inequality \eqref{ineq1}, we obtain that for any $\xi \in \bR^d$,
$$|e^{-i \xi \cdot z}-1|^2 \,|\cF G(t,\cdot)(\xi)|^2 \leq K_{\theta}|z|^{2\theta} |\xi|^{2\theta}  e^{-t|\xi|^2} \leq  K_{\theta} C_{\theta}|z|^{2\theta} t^{-\theta}e^{-t|\xi|^2/2}.$$
Hence
\begin{align*}
\sup_{\eta \in \bR^d} \int_{\bR^d} |e^{-i(\xi+\eta)\cdot z}-1|^2 \, |\cF G(t,\cdot)(\xi+\eta)|^2 \mu(d\xi) \leq
 K_{\theta} C_{\theta}|z|^{2\theta} t^{-\theta} \sup_{\eta \in \bR^d}\int_{\bR^d}e^{-t|\xi+\eta|^2/2} \mu(d\xi)
\end{align*}
and relation \eqref{sup-2} follows by Lemma \ref{sup-lemma}, exactly as above.
\end{proof}

From Lemma 5.1 of \cite{balan-chen18}, we know that condition \eqref{Holder-cond} is equivalent to
\begin{equation}
\label{cond-k}
\int_0^1 \frac{k(s)}{s^{1-\eta}} ds<\infty.
\end{equation}

\begin{theorem}
\label{Holder-th}
Under condition \eqref{Holder-cond}, for any $p \geq 2$ and $T>0$, there exists a constant $C>0$ depending on $p,T$ and $\eta$ such that for any $t,t' \in [0,T]$ and $x,x' \in \bR^d$,
$$\|u(t,x)-u(t',x')\|_p \leq C \big(|t-t'|^{\frac{1-\eta}{2}}+|x-x'|^{1-\eta} \big).$$
Consequently, for any $T>0$ and for any compact set $K \subset \bR^d$, the solution $\{u(t,x); t \in [0,T],x \in K\}$ to equation \eqref{heat} has a modification which is $\theta_1$-H\"older continuous in time for any $\theta_1 \in (0,\frac{1-\eta}{2})$, and $\theta_2$-H\"older continuous in space for any $\theta_2 \in (0,1-\eta)$.
\end{theorem}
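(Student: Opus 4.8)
The plan is to use the Wiener chaos expansion $u(t,x)=\sum_{n\ge0}J_n(t,x)$ together with the hypercontractivity bound \eqref{equivalence-p-norms}, which reduces the $L^p$-increment estimates to $L^2$-increment estimates for each chaos $J_n$; these in turn follow from the refined heat-kernel bounds of Proposition \ref{t-increm-G}. By the triangle inequality it suffices to treat separately the spatial increment $\|u(t,x+z)-u(t,x)\|_p$ and the temporal increment $\|u(t+h,x)-u(t,x)\|_p$ for $h>0$, since on $[0,T]$ one has $|t-t'|,|x-x'|\le T$ and
$$\|u(t,x)-u(t',x')\|_p\le \|u(t',x)-u(t',x')\|_p+\|u(t',x)-u(t,x)\|_p.$$
Throughout I write $\theta=1-\eta$, and I will freely use that $\Gamma_t$, $h_n(t)$ and $\widetilde H(t;\gamma)$ are non-decreasing in $t$, and that $\widetilde H(T;(p-1)\Gamma_T)<\infty$ (and likewise with $2T$) by Dalang's condition.

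For the spatial increment I would start from \eqref{norm-p-J2}--\eqref{def-psi-tz}, namely $\|J_n(t,x+z)-J_n(t,x)\|_2^2=\tfrac1{n!}C_n(t,z)$ with $C_n(t,z)\le\Gamma_t^n\int_{[0,t]^n}\psi^{(n)}_{t,z}(\mathbf t,\mathbf t)\,d\mathbf t$. Integrating out the variable $\xi_n$ by means of \eqref{sup-2} (with the shift equal to $\xi_1+\dots+\xi_{n-1}$ and $t$ replaced by $u_n=t-t_{\rho(n)}$) and the remaining variables by Lemma \ref{sup-lemma} gives $\psi^{(n)}_{t,z}(\mathbf t,\mathbf t)\le C|z|^{2\theta}\big(\prod_{j=1}^{n-1}k(u_j)\big)u_n^{-\theta}k(u_n/2)$. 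Passing to the ordered simplex produces a factor $n!$, the integral over $t_1<\dots<t_{n-1}$ yields $h_{n-1}(t_n)$, and the substitution $s=t-t_n$ leaves $\int_0^{t}h_{n-1}(t-s)s^{-\theta}k(s/2)\,ds\le h_{n-1}(T)\int_0^{T}s^{-\theta}k(s/2)\,ds$; this last integral is finite by \eqref{cond-k} near $0$ and by local integrability of $k$ (a consequence of Dalang's condition) away from $0$. Hence $\|J_n(t,x+z)-J_n(t,x)\|_2^2\le C_T\,\Gamma_T^n h_{n-1}(T)\,|z|^{2\theta}$, and summing over $n\ge1$ with weights $(p-1)^{n/2}$ and shifting the index $n\mapsto n-1$ recognizes $\sqrt{(p-1)\Gamma_T}\,\widetilde H(T;(p-1)\Gamma_T)<\infty$, so $\|u(t,x+z)-u(t,x)\|_p\le C|z|^{1-\eta}$.

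For the temporal increment I would reuse the decomposition from Step 5 of the proof of Theorem \ref{exist-th}: $\|J_n(t+h,x)-J_n(t,x)\|_2^2\le\tfrac2{n!}\big(A_n(t,h)+B_n(t,h)\big)$. The term $A_n(t,h)$ of \eqref{estimate-A}--\eqref{def-psi-th} is handled exactly as $C_n(t,z)$ above, but using \eqref{sup-1} in place of \eqref{sup-2}; this produces the factor $h^{\theta}$ and the bound $A_n(t,h)\le C_T\,\Gamma_T^n n!\,h_{n-1}(T)\,h^{\theta}$. The term $B_n(t,h)$ comes from the new region $D_{t,h}=[0,t+h]^n\setminus[0,t]^n$: bounding $\gamma^{(n)}_{t,h}(\mathbf t,\mathbf t)\le\prod_{j=1}^n k(u_j)$ via Lemma \ref{sup-lemma} and observing that on $D_{t,h}$ the largest coordinate lies in $(t,t+h)$, the ordered integral collapses to $\int_t^{t+h}h_{n-1}(t_n)k(t+h-t_n)\,dt_n\le h_{n-1}(2T)\int_0^h k(s)\,ds$; then the elementary estimate $\int_0^h k(s)\,ds\le h^{\theta}\int_0^1 k(s)s^{-\theta}\,ds$ for $h\le1$, together with the trivial bound $\int_0^h k(s)\,ds\le h^{\theta}\int_0^T k(s)\,ds$ for $h\in[1,T]$, again yields a factor $h^{\theta}$. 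Summing over $n$ as in the spatial case gives $\|u(t+h,x)-u(t,x)\|_p\le C h^{(1-\eta)/2}$, and combining the two estimates proves the displayed moment inequality with $C$ depending only on $p,T,\eta$.

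Finally, the Hölder modification follows from the multiparameter Kolmogorov continuity criterion on $[0,T]\times K$ with respect to the parabolic metric $\rho\big((t,x),(t',x')\big)=|t-t'|^{1/2}+|x-x'|$: by subadditivity of $r\mapsto r^{1-\eta}$ the moment bound reads $\|u(t,x)-u(t',x')\|_p\le C\,\rho\big((t,x),(t',x')\big)^{1-\eta}$, and since $\rho$-balls of radius $r$ have Lebesgue measure of order $r^{d+2}$, the criterion (applied for each $p$ large) produces a single modification that is $\gamma$-Hölder in the metric $\rho$ for every $\gamma<1-\eta$; this translates into $\theta_1$-Hölder continuity in time for every $\theta_1<\tfrac{1-\eta}{2}$ and $\theta_2$-Hölder continuity in space for every $\theta_2<1-\eta$. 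I expect the only genuinely new point, relative to the $L^p$-continuity argument of Section \ref{section-exist}, to be the quantitative control of the boundary term $B_n(t,h)$, i.e.\ extracting the sharp power $h^{1-\eta}$ from $\int_0^h k(s)\,ds$ through the equivalent condition \eqref{cond-k}; the rest amounts to tracking constants through the same chaos estimates, with care taken that all bounds are uniform over time intervals of the form $[0,T]$ and over $x\in\bR^d$.
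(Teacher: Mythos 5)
Your proposal is correct and follows essentially the same route as the paper: hypercontractivity on each Wiener chaos, the decomposition into $A_n(t,h)$, $B_n(t,h)$ and $C_n(t,z)$ from the $L^p$-continuity argument, the kernel bounds \eqref{sup-1}--\eqref{sup-2} of Proposition \ref{t-increm-G} together with Lemma \ref{sup-lemma}, the collapse of the ordered simplex integrals to $h_{n-1}$ and the weighted sum recognized as $\widetilde H\big(T;(p-1)\Gamma_T\big)$, with the sharp factor $h^{\theta}$ for the boundary term extracted from $\int_0^h k(s)\,ds$ via \eqref{cond-k}, exactly as in the paper (which writes this as $\int_0^h k(s)\,ds\le h^{\theta}\rho_h$ with $\rho_t=\int_0^t s^{-\theta}k(s)\,ds$). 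The concluding Kolmogorov step you spell out is the standard argument the paper delegates to its references.
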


\begin{proof}
We argue as in the proof of Theorem 8.3 of \cite{balan-song17}. Let $\theta=1-\eta$.

{\em Step 1. (increments in time)}
Assume that $t'>t$ and let $h=t'-t$. By Minkowski's inequality and \eqref{norm-p-J},
\begin{align}
\label{A-B} \|u(t+h,x)-u(t,x)\|_p  & \leq \sum_{n\geq
1}\|J_n(t+h,x)-J_n(t,x)\|_p \nonumber \\
& \leq \sum_{n\geq 1}
(p-1)^{n/2}\left[\frac{2}{n!}\big(A_n(t,h)+B_n(t,h) \big)
\right]^{1/2}.
\end{align}

We first treat $A_n(t,h)$. Recall estimate \eqref{estimate-A} for $A_n(t,h)$ that we obtained above. Using definition \eqref{def-psi-th} of $\psi_{t,h}^{(n)}({\bf t},{\bf t})$, followed by Lemma \ref{sup-lemma} and \eqref{sup-1}, we see that
\begin{align*}
\psi_{t,h}^{(n)}({\bf t},{\bf t}) & \leq \prod_{j=1}^{n-1} \left(\sup_{\eta \in \bR^d} \int_{\bR^d} |\cF G(u_j,\cdot)(\xi_j+\eta)|^2 \mu(d\xi_j)\right)\\
&\quad \quad \times \sup_{\eta \in \bR^d} \int_{\bR^d} |\cF G(u_n+h,\cdot)(\xi_n+\eta)-\cF G(u_n,\cdot)(\xi_n+\eta)|^2 \mu(d\xi_n) \\
&\leq C h^{\theta}\prod_{j=1}^{n-1} k(u_j) u_n^{-\theta} k(u_n/2).
\end{align*}
We let $\rho_t=\int_0^t s^{-\theta}k(s)ds$. Then
\begin{align*}
A_n(t,h)&\leq C h^{\theta}  \Gamma_t^n \sum_{\rho \in S_n} \int_{0<t_{\rho(1)}<\ldots<t_{\rho(n)}<t}\prod_{j=1}^{n-1}k(t_{\rho(j+1)}-t_{\rho(j)}) (t-t_{\rho(n)})^{-\theta}
k \left(\frac{t-t_{\rho(n)}}{2} \right) d{\bf t} \\
&= C h^{\theta}  \Gamma_t^n n! \int_{0<t_1<\ldots<t_n<t} \prod_{j=1}^{n-1}k(t_{j+1}-t_j) (t-t_n)^{-\theta} k\left( \frac{t-t_n}{2}\right)d{\bf t} \\
&=C h^{\theta}  \Gamma_t^n n! \int_0^t h_{n-1}(t_n) (t-t_n)^{-\theta}  k\left( \frac{t-t_n}{2}\right)dt_n \\
& \leq C h^{\theta}  \Gamma_t^n n!  h_{n-1}(t) \int_0^t (t-t_n)^{-\theta}  k\left( \frac{t-t_n}{2}\right)dt_n \\
&=C h^{\theta}  \Gamma_t^n n!  h_{n-1}(t)  \rho_{t/2}.
\end{align*}

From here, we infer that
\begin{align*}
\sum_{n\geq 1} (p-1)^{n/2}\left(\frac{1}{n!}A_n(t,h) \right)^{1/2} & \leq C h^{\theta/2} \sqrt{(p-1)\Gamma_t \rho_{t/2}} \sum_{n\geq 1} \sqrt{(p-1)^{n-1}\Gamma_t^{n-1} h_{n-1}(t)} \\
& = C h^{\theta/2} \sqrt{(p-1)\Gamma_t \rho_{t/2}} \, \widetilde{H}\big(t; (p-1)\Gamma_t\big).
\end{align*}
Since $\Gamma_t$ and $\rho_t$ are non-decreasing in $t$, and $\widetilde{H}(t;\gamma)$ is non-decreasing in both $t$ and $\gamma$, it follows that for any $T>0$, there exists a constant $C_T^{(1)}>0$ depending on $T$ such that
\begin{equation}
\label{step-A}
\sum_{n\geq 1} (p-1)^{n/2}\left(\frac{1}{n!}A_n(t,h) \right)^{1/2} \leq C_T^{(1)} h^{\theta/2}.
\end{equation}

Next, we treat $B_n(t,h)$. Recall estimate \eqref{estimate-B} for $B_n(t,h)$ that we obtained above. Using definition \eqref{def-gamma-th} of $\gamma_{t,h}^{(n)}({\bf t},{\bf t})$, followed by Lemma \ref{sup-lemma}, we see that
\begin{align*}
\gamma_{t,h}^{(n)}({\bf t},{\bf t})
& \leq \prod_{j=1}^{n-1} \left(\sup_{\eta \in \bR^d}\int_{\bR^d} |\cF G(u_j,\cdot)(\xi_j+\eta)|^2 \mu(d\xi_j) \right) \\
& \quad \quad  \times \sup_{\eta \in \bR^d} \int_{\bR^d} |\cF
G(u_n+h,\cdot)(\xi_n+\eta)|^2 \mu(d\xi_n) =\prod_{j=1}^{n-1}k(u_j)
k(u_n+h),
\end{align*}
Note that $\Gamma_{t+h} \leq \Gamma_T$ since $t+h=t' \leq T$. We observe that if $(t_1,\ldots,t_n) \in D_{t,h}$ then there exists at least one index $i$ with $t_i>t$. So,
$$D_{t,h}=\bigcup_{\rho \in S_n}\{(t_1,\ldots,t_n); 0 \leq t_{\rho(1)} \leq \ldots \leq t_{\rho(n)}, \, t<t_{\rho(n)} \leq t+h \}.$$
It follows that
\begin{align*}
B_{n}(t,h) & \leq \Gamma_T^n \sum_{\rho \in S_n} \int_t^{t+h} \int_{0<t_{\rho(1)}<\ldots<t_{\rho(n-1)}<t_{\rho(n)}} \prod_{j=1}^{n-1}k(t_{\rho(j+1)}-t_{\rho(j)}) k(t-t_{\rho(n)}+h)d{\bf t}\\
&=\Gamma_t^n n! \int_t^{t+h} \int_{0<t_1<\ldots<t_{n-1}<t_n} \prod_{j=1}^{n-1}k(t_{j+1}-t_j) k(t-t_n+h)d{\bf t}\\
&=\Gamma_T^n n! \int_t^{t+h} h_{n-1}(t_n) k(t-t_n+h)dt_n \\
& \leq \Gamma_T^n n! h_{n-1}(t+h) \int_0^{h} k(s)ds \leq \Gamma_T^n n! h_{n-1}(T) h^{\theta} \rho_{h} \leq \Gamma_T^n n! h_{n-1}(T) h^{\theta} \rho_{T}.
\end{align*}
Hence,
\begin{align}
\nonumber
\sum_{n\geq 1}(p-1)^{n/2}\left(\frac{1}{n!}B_n(t,h) \right)^{1/2} & \leq  h^{\theta/2} \rho_T \sqrt{(p-1)\Gamma_T} \sum_{n\geq 1} \sqrt{(p-1)^{n-1}\Gamma_T^{n-1} h_{n-1}(T)}\\
&=C_{T}^{(2)}h^{\theta/2},
\end{align}
where $C_T^{(2)}>0$ is a constant depending on $T$.

{\em Step 2. (increments in space)} Let $z=x'-x$. By Minkowski's inequality and \eqref{norm-p-J2},
\begin{align}
\nonumber \|u(t,x+z)-u(t,x)\|_p  & \leq \sum_{n\geq
1}\|J_n(t,x+z)-J_n(t,x)\|_p  \\
& \leq \sum_{n\geq 1} (p-1)^{n/2}\left( \frac{1}{n!}C_n(t,z)
\right)^{1/2}.
\end{align}
Recall estimate \eqref{estimate-C} for $C_n(t,h)$ that we obtained above. Using definition \eqref{def-psi-tz} of $\psi_{t,z}^{(n)}({\bf t},{\bf t})$, followed by Lemma \ref{sup-lemma} and Proposition \ref{t-increm-G}, we see that
\begin{align*}
\psi_{t,z}^{(n)}({\bf t},{\bf t}) & \leq \prod_{j=1}^{n-1} \left(\sup_{\eta \in \bR^d} \int_{\bR^d} |\cF G(u_j,\cdot)(\xi_j+\eta)|^2 \mu(d\xi_j) \right)\\
& \quad \times \sup_{\eta \in \bR^d} \int_{\bR^d} |\cF G(u_n,\cdot)(\xi_n+\eta)|^2 |1-e^{-i(\xi_n+\eta) \cdot z}|^2 \mu(d\xi_n)\\
&\leq C  |z|^{2\theta} \prod_{j=1}^{n-1}k(u_{j-1}) u_n^{-\theta}k \left(\frac{u_n}{2} \right).
\end{align*}
It follows that
\begin{align*}
C_n(t,z) & \leq C |z|^{2\theta}\Gamma_t^n n! \int_{0<t_1<\ldots<t_n<t} \prod_{j=1}^{n-1}k(t_{j+1}-t_j) (t-t_n)^{-\theta} k\left(\frac{t-t_n}{2} \right)d{\bf t}\\
&= C |z|^{2\theta}\Gamma_t^n n! \int_0^t h_{n-1}(t_n) (t-t_n)^{-\theta} k\left(\frac{t-t_n}{2} \right) dt_n\\
&\leq  C |z|^{2\theta}\Gamma_t^n n! \, h_{n-1}(t)  \rho_{t/2}.
\end{align*}
Similarly to \eqref{step-A}, we infer that
$$\sum_{n\geq 1} (p-1)^{n/2}\left(\frac{1}{n!}C_n(t,z) \right)^{1/2} \leq C_T^{(3)} |z|^{\theta},$$
where $C_T^{(3)}>0$ is a constant depending on $T$.
\end{proof}


\begin{thebibliography}{99}


\bibitem{balan-chen18} Balan, R. M. and Le, C. (2018). Parabolic Anderson Model with space-time homogeneous Gaussian noise and rough initial conditions. To appear in {\em J. Theor. Probab.} Preprint available on arXiv:1606.08875.


\bibitem{balan-song17} Balan, R. M. and Song, J. (2017). Hyperbolic Anderson Model with space-time homogeneous Gaussian noise. {\em Latin Amer. J. Probab. Math. Stat.} {\bf 14}, 799-849.


\bibitem{conus-dalang09} Conus, D. and Dalang, R. C. (2009) The
non-linear stochastic wave equation in high dimensions. {\em Electr.
J. Probab. } {\bf 22}, 629-670.


\bibitem{dalang99} Dalang R.C. (1999) Extending martingale measure stochastic integral with applications to spatially homogeneous spde's. {Electr. J. Probab.} {\bf 4}.

\bibitem{HHNT} Hu, Y., Huang, J., Nualart, D. and Tindel, S. (2015). Stochastic heat equations with general multiplicative Gaussian noises: H\"{o}lder continuity and intermittency. {\em Electr. J. Probab.} {\bf 20}, no. 55, 1-50.
    
\bibitem{huang-le18} Huang, J. and Le, C. (2018). Comparison principle for stochastic heat equation on $\bR^d$.
To appear in {\em Ann. Probab.} Preprint available on arXiv:1607.03998.

\bibitem{nualart06} Nualart, D. (2006). {\em The Malliavin Calculus and Related Topics}. Second Edition. Springer-Verlag, Berlin.

\bibitem{sanz-sarra02} Sanz-Sol\'e, M. and Sarr\'a, M. (2002). H\"older continuity for the stochastic heat equation with spatially correlated noise. {\em Progress in Probab.} {\bf 52}, 259-268.

\bibitem{song17} Song, J. (2017). On a class of stochastic partial differential equations. {\em Stoch. Proc. Appl.} {\bf 127}, 37-79.


\end{thebibliography}
\end{document}